\documentclass[12pt,oneside,reqno]{scrartcl}

\usepackage{cihan}
\usepackage{cases}
\usepackage{yfonts}
\usepackage[affil-it]{authblk}
\usepackage{bbm}

\author{Cihan Bahran}
\affil{Department of Mathematics, Bilkent University\\
Ankara 06800, Turkey\\
cihan.bahran@bilkent.edu.tr}
\date{}
\usepackage{etoolbox}
 \usepackage{relsize}
 
\usepackage[scr=boondoxo,scrscaled=1.1]{mathalfa} 
 
\usepackage{bookmark}
\usepackage{textcomp}
\usepackage{centernot}

\usepackage{appendix}
\usepackage{xstring} 

\AtBeginDocument{%
  \addtolength\abovedisplayskip{-0.5\baselineskip}%
  \addtolength\belowdisplayskip{-0.5\baselineskip}%
}

\title{Regularity of an \texorpdfstring{$\FI$}{Lg}-module via the derivative functor}

\newcommand{\FI}{\mathbf{FI}}

\DeclareMathOperator{\crit}{crit}

\newcounter{dummy}
\makeatletter
\newcommand\sitem[1][]{\item[(#1)]\refstepcounter{dummy}\def\@currentlabel{#1}}
\makeatother


\newcommand{\cofi}[1]{\co_{#1}^{\FI}}

\newcommand{\width}[2]{\operatorname{width}_{\,#1}^{\deriv}(#2)}

\DeclareMathOperator{\reg}{reg}

\newcommand{\tgen}{t_{0}}
\newcommand{\trel}{t_{1}}
\newcommand{\shift}[2]{\mathbf{\Sigma}^{#2}   #1 }

\newcommand{\deriv}{\mathbf{\Delta}}
\newcommand{\kert}{\mathbf{K}}

\newcommand{\locoh}[1]{\co_{\mathfrak{m}}^{#1}}

\AtBeginDocument{%
   \def\MR#1{}
}

\makeatletter
\def\blfootnote{\gdef\@thefnmark{}\@footnotetext}
\makeatother

\begin{document}
\ytableausetup{centertableaux}
\maketitle

\blfootnote{\textup{2010} \textit{Mathematics Subject Classification}.
Primary 18A25.} 
\blfootnote{\textit{Key words and phrases}. $\FI$-modules, regularity.}
\vspace{-0.8 in}
\begin{onecolabstract} 
We characterize the regularity of an $\FI$-module using the derivative functors.
\end{onecolabstract}
\vspace{0.4 in}

{\tableofcontents}

\section{Introduction}

An $\FI$-module is a functor $\FI \rarr \lMod{\zz}$ where $\FI$ is the category of finite sets and injections and $\lMod{\zz}$ is the category of abelian groups. We write $\lMod{\FI}$ for the functor category $[\FI, \lMod{\zz}]$. Given an $\FI$-module $W$, we write 
\begin{align*}
 \deg(W) &:= \min\{d \geq -1 : W_{S} = 0 \text{ for } |S| > d\} 
 \\
 &\in \{-1,0,1,2,3,\dots\} \cup \{\infty\} \, .
\end{align*}

\paragraph{$\FI$-homology and regularity.} A systematic way of investigating the ``unstable'' part of an $\FI$-module $V$ is to study its quotient $\cofi{0}(V)$ defined by
\begin{align*}
 \cofi{0}(V)_{S} := \coker\! \left(
 \bigoplus_{A \subsetneq S} V_{A} \rarr V_{S}
 \right) \, .
\end{align*}
The functor $\cofi{0} \colon \lMod{\FI} \rarr \lMod{\FI}$ is right exact and has left derived functors $\cofi{i} := \operatorname{L}_{i}\!\cofi{0}$.
Setting $t_{i}(V) : = \deg\!\left(\cofi{i}(V)\right)$, we say $V$ is \textbf{presented in finite degrees} if $\tgen(V)$ and $\trel(V)$ are finite. We also set the \textbf{regularity} of $V$ as
\begin{align*}
 \reg(V) &:= \max\!\left\{t_{i}(V) - i : i \geq 1\right\} 
 \\
 &\in \{-2,-1,0,1,\dots\} \cup\{\infty\}
\end{align*}
which is finite whenever $V$ is presented in finite degrees \cite[Theorem A]{ce-homology}.

\paragraph{The derivative functor, iterated and derived.} Given any $\FI$-module $V$, we write $\shift{V}{}$ for the composition 
\begin{align*}
 \FI \xrightarrow{- \sqcup \{*\}} \FI \xrightarrow{V} \lMod{\zz} \, .
\end{align*}
and call it the \textbf{shift} of $V$. This procedure defines the \textbf{shift functor} 
\[
\shift{}{} \colon \lMod{\FI} \to \lMod{\FI} \, .
\]
The inclusions $S \emb S \sqcup \{*\}$ for all finite sets $S$ define  a natural transformation 
\[
 \iota \colon \id_{\lMod{\FI}} \to \shift{}{} \, ,
\]
whose cokernel $\deriv := \coker(\iota)$
we call the \textbf{derivative functor}: it is a right exact functor
\[
 \deriv \colon \lMod{\FI} \rarr \lMod{\FI} \, .
\]
For every $i,a \geq 0$, we write $\co_{i}^{\deriv^{\!a}} := \operatorname{L}_{i}\!\deriv^{\!a}$ for the $i$-th left derived functor of the $a$-fold composite $\deriv^{\!a}$. 


The objective of this paper is to characterize the regularity of an $\FI$-module in terms of the derivative functor.

\begin{defn}
  Given an $\FI$-module $V$ and $i \geq 0$, we write
\begin{align*}
\width{i}{V} &:= 
\sup\!\left\{
  \deg\!\left(
           \co_{i}^{\deriv^{\!a}}(V)
        \right) 
  + a \,:\, a \geq 0 \text{ with } \co_{i}^{\deriv^{\!a}}(V) \neq 0
  \right\}
  \\
  & \in \{0,1,2, \dots\} \cup \{-\infty, \infty\}
\end{align*}
with the convention $\sup(\empt) = -\infty$.
\end{defn}

\begin{thmx}\label{width}
 Let $V$ be an $\FI$-module presented in finite degrees which is not $\cofi{0}$-acyclic. Then for every $i \geq 1$, we have 
\begin{align*}
 \width{i}{V} - i = \reg(V) \, .
\end{align*}
\end{thmx}

\paragraph{Relationship with previous literature.} The invariant $\width{1}{V}$ was first considered explicitly in \cite[Definition 3.7]{ramos-fig} (denoted $\partial\!\operatorname{width}$ there). 

The invariants $\width{i}{V}$ can be used to conceptually compartmentalize the proof of \cite[Theorem A]{ce-homology}, \cite[Theorem 3.19]{ramos-fig} bounding $\reg(V)$ in terms of $\tgen(V), \trel(V)$ into three parts as follows:

\begin{thm}[{\cite[Corollary 3.18]{ramos-fig}, based on \cite[Theorem E]{ce-homology}}] \label{width-bound}
 Every $\FI$-module $V$ presented in finite degrees satisfies
 \[
  \width{1}{V} \leq \trel(V) + \min\{\tgen(V), \trel(V)\} \, .
 \] 
\end{thm}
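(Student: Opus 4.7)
The plan is to prove, by induction on $a \geq 0$, the pointwise bound
$\deg\bigl(\co_{1}^{\deriv^{\!a}}(V)\bigr) + a \leq B := \trel(V) + \min\{\tgen(V), \trel(V)\}$.
The case $a = 0$ is vacuous since $\deriv^{\!0} = \id_{\lMod{\FI}}$ is exact.
For $a = 1$ the bound reads $\deg(\kert V) + 1 \leq B$ with
$\kert V := \ker(\iota_{V} \colon V \to \shift{V}{})$; this is the original
form of the statement recorded in \cite[Corollary~3.18]{ramos-fig}, derived
via the complex technology of \cite[Theorem~E]{ce-homology}, which I would
invoke directly as the base case rather than attempt to reprove.

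For the inductive step I would set up the Grothendieck composition
spectral sequence
\[
  E_{2}^{p,q} = \co_{p}^{\deriv}\!\bigl(\co_{q}^{\deriv^{\!a-1}}(V)\bigr)
    \;\Longrightarrow\; \co_{p+q}^{\deriv^{\!a}}(V) \, .
\]
Its existence hinges on $\deriv$ preserving projectives: a direct
computation from the definitions gives
$\deriv M(k) \cong M(k-1)^{\oplus k}$ for the free $\FI$-module $M(k)$
generated in degree $k$, so iterated $\deriv$ sends projectives to
projectives, making them universally acyclic, and the spectral sequence
above is produced by the standard Cartan--Eilenberg hyperderived
mechanism applied to the composite $\deriv \circ \deriv^{a-1}$.

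Reading off total degree $n = 1$: the object $\co_{1}^{\deriv^{\!a}}(V)$
is an extension whose graded pieces sit in $E_{\infty}^{1,0}$, a
subquotient of
$\co_{1}^{\deriv}(\deriv^{\!a-1}V) = \kert(\deriv^{\!a-1}V)$, and in
$E_{\infty}^{0,1}$, a subquotient of
$\deriv\bigl(\co_{1}^{\deriv^{\!a-1}}(V)\bigr)$. For the $E^{0,1}$ term
the inductive hypothesis gives
$\deg\bigl(\co_{1}^{\deriv^{\!a-1}}(V)\bigr) \leq B - (a-1)$, and since
$\deriv$ drops degree by $1$, the contribution to $\deg + a$ is at most
$B$. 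For the $E^{1,0}$ term I would apply the base case to
$W := \deriv^{\!a-1}V$, which requires propagating the bound $B$
through $\deriv$ — that is,
$\trel(\deriv W') + \min\{\tgen(\deriv W'), \trel(\deriv W')\} \leq \trel(W') + \min\{\tgen(W'), \trel(W')\} - 1$ —
using the long exact sequence in $\cofi{\bullet}$ attached to
$0 \to \operatorname{im}(\iota_{W'}) \to \shift{W'}{} \to \deriv W' \to 0$,
together with the exactness of $\shift{}{}$ and the identity
$\cofi{\bullet}(\shift{W'}{}) = \shift{\cofi{\bullet}(W')}{}$.

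The main obstacle is precisely this last monotonicity step: the crude
long-exact-sequence bound
$\trel(\deriv W') \leq \max\{\tgen(W'), \trel(W') - 1\}$ is insufficient
when $\tgen(W') > \trel(W')$, so a finer analysis of the connecting
homomorphism — in the style of the Church--Ellenberg--Ramos
degree-tracking arguments — is required to propagate the minimum sharply
enough. Once this is in place, both spectral-sequence contributions
yield the desired bound $B$, and the induction closes formally; the
deep input of the theorem remains concentrated in the base case $a=1$.
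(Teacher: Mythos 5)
First, note that the paper does not prove this statement at all: it is quoted from \cite[Corollary 3.18]{ramos-fig}/\cite[Theorem E]{ce-homology}, so your proposal can only be measured against the literature it cites. Your inductive skeleton is fine as far as it goes: $\deriv$ does send $M(k)$ to $M(k-1)^{\oplus k}$, $\co_{1}^{\deriv}(W) \cong \kert(W)$, and the total-degree-one information of your composite spectral sequence is exactly the short exact sequence of Proposition \ref{ce-seq} with $i=1$ (so the spectral sequence is unnecessary machinery), and the $E^{0,1}$-contribution is handled correctly by the induction hypothesis since $\deriv$ drops degree by one.

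The genuine gap is the $E^{1,0}$-term, i.e.\ bounding $\deg \kert\!\left(\deriv^{\!a-1}V\right) + a$, which you reduce to the unproven monotonicity
\begin{align*}
 \trel(\deriv W) + \min\{\tgen(\deriv W), \trel(\deriv W)\}
 \;\leq\;
 \trel(W) + \min\{\tgen(W), \trel(W)\} - 1 \, ,
\end{align*}
and then explicitly concede that the available long-exact-sequence estimate $\trel(\deriv W) \leq \max\{\tgen(W), \trel(W)-1\}$ does not yield it when $\tgen(W) > \trel(W)$. That concession is the whole theorem: controlling $\deg\kert$ of \emph{iterated} derivatives (equivalently, propagating presentation degrees through $\deriv$ sharply enough to keep the $\min$) is precisely the hard content of \cite[Theorem E]{ce-homology}, whose proof is an intricate simultaneous induction and not a formal consequence of the $a=1$ case; so the proof does not close. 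Moreover, the tool you propose for the propagation step is false as stated: it is not true that $\cofi{\bullet}\!\left(\shift{W}{}\right) = \shift{\cofi{\bullet}(W)}{}$. For $W = M(k)$ one has $\shift{M(k)}{} \cong M(k) \oplus M(k-1)^{\oplus k}$, so $\cofi{0}\!\left(\shift{M(k)}{}\right)$ is supported in degrees $k$ and $k-1$, whereas $\shift{\cofi{0}(M(k))}{}$ is supported only in degree $k-1$; only inequalities such as $\deg\cofi{i}\!\left(\shift{W}{}\right) \leq \deg\cofi{i}(W)$ survive, and these are too coarse to recover the $\min$. Finally, your base case mis-attributes the sources: \cite[Corollary 3.18]{ramos-fig} \emph{is} the full width bound (sup over all $a$), not the $a=1$ statement, so invoking it there is either circular or, if you only invoke the torsion-degree bound $\deg\kert(V) + 1 \leq \trel(V) + \min\{\tgen(V),\trel(V)\}$ from \cite{ce-homology}, leaves the induction resting on the missing propagation lemma.
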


\begin{thm}[{\cite[proof of Theorem 4.8]{ce-homology}}]
 For every $\FI$-module $V$, there is a weakly decreasing sequence 
\[ 
\width{1}{V} - 1 \geq \width{2}{V} - 2 \geq \width{3}{V} - 3 \geq \cdots 
\]
\end{thm}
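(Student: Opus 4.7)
The claim reduces to $\width{i+1}{V} \leq \width{i}{V} + 1$ for every $i \geq 1$. The plan is to derive the pointwise recursion
\[
 \deg\co_i^{\deriv^{\!a+1}}(V) \leq \max\!\left\{\deg\co_i^{\deriv^{\!a}}(V) - 1, \; \deg\co_{i-1}^{\deriv^{\!a}}(V)\right\}
\]
and iterate it in $a$.

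To derive this, I would aim for the short exact sequence
\[
 0 \to \deriv\co_i^{\deriv^{\!a}}(V) \to \co_i^{\deriv^{\!a+1}}(V) \to \kert\co_{i-1}^{\deriv^{\!a}}(V) \to 0 \, ,
\]
with $\kert := \ker(\iota)$. Given a projective resolution $P_\bullet \to V$, two easy facts do the job: that $\deriv$ preserves projectives, and that $\iota$ is injective on every projective --- both checked directly on the representable projectives. Consequently, for each $a \geq 0$,
\[
 0 \to \deriv^{\!a} P_\bullet \to \shift{\deriv^{\!a} P_\bullet}{} \to \deriv^{\!a+1} P_\bullet \to 0
\]
is a short exact sequence of complexes of projectives. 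Since $\shift{}{}$ is exact, the associated homology long exact sequence reads
\[
 \cdots \to \co_i^{\deriv^{\!a}}(V) \xrightarrow{\iota} \shift{\co_i^{\deriv^{\!a}}(V)}{} \to \co_i^{\deriv^{\!a+1}}(V) \to \co_{i-1}^{\deriv^{\!a}}(V) \xrightarrow{\iota} \shift{\co_{i-1}^{\deriv^{\!a}}(V)}{} \to \cdots
\]
and extracting the middle section --- identifying $\coker(\iota) = \deriv(-)$ and $\ker(\iota) = \kert(-)$ --- gives the desired short exact sequence.

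Applying the degree bounds $\deg\deriv W \leq \deg W - 1$ (as $\deriv W$ is a quotient of $\shift{W}{}$) and $\deg\kert W \leq \deg W$ (as $\kert W \subseteq W$) to the short exact sequence yields the recursion. Setting $W_i(a) := \deg \co_i^{\deriv^{\!a}}(V) + a$, with the convention $W_i(a) = -\infty$ when the module vanishes, the recursion becomes $W_i(a+1) \leq \max\{W_i(a), W_{i-1}(a) + 1\}$. For $i \geq 1$, $W_i(0) = -\infty$ since $\co_i^{\id}(V) = 0$, so iterating gives $W_i(a) \leq \width{i-1}{V} + 1$ for every $a \geq 1$. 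Taking the supremum over $a$ delivers $\width{i}{V} \leq \width{i-1}{V} + 1$, which is the claim.

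The main obstacle is the assembly of the short exact sequence; once $\iota$ is known to be injective on projective FI-modules, the remainder is standard homological bookkeeping. A cleaner alternative is the Grothendieck spectral sequence for $\deriv^{\!a+1} = \deriv \circ \deriv^{\!a}$, which collapses to a two-line sequence because the higher left derived functors of $\deriv$ vanish above degree $1$, and produces the same short exact sequence.
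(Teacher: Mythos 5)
Your proposal is correct and follows essentially the same route as the proof the paper points to: the short exact sequence $0 \to \deriv\!\left(\co_{i}^{\deriv^{\!a}}(V)\right) \to \co_{i}^{\deriv^{\!a+1}}(V) \to \kert\!\left(\co_{i-1}^{\deriv^{\!a}}(V)\right) \to 0$ (Proposition \ref{ce-seq}, which you rederive from a projective resolution using that $\iota$ is split injective on representables and that $\deriv$ preserves projectives) combined with the degree bounds $\deg \deriv W \leq \deg W - 1$ and $\deg \kert W \leq \deg W$, iterated in $a$. The only difference is that you reprove the cited exact sequence rather than quoting it, which is fine.
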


\begin{thm}[{\cite[proof of Theorem A, pages 2403-2404]{ce-homology}}]
 For every $\FI$-module $V$ with $\tgen(V) < \trel(V) < \infty$, we have
 \[
  \reg(V) \leq \max\left(
  \{\trel(V) - 1\} \cup
  \{\width{i}{V} - i : i \geq 2\}
  \right) \, .
 \]
\end{thm}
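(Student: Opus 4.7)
The $i = 1$ term in $\reg(V) = \max_{i \geq 1}(t_i(V) - i)$ is $t_1(V) - 1 = \trel(V) - 1$, which is already one of the arguments of the maximum on the right-hand side. The substantive work is therefore to prove, for every $i \geq 2$,
\[
 t_i(V) - i \;\leq\; \max\!\bigl(\trel(V) - 1,\ \width{i}{V} - i\bigr).
\]

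The main tool will be the four-term exact sequence
\[
 0 \to K \to V \xrightarrow{\iota} \shift{V}{} \to \deriv V \to 0, \qquad K := \ker \iota,
\]
combined with two structural facts: the shift functor is exact, so $\cofi{p}(\shift{V}{}) \cong \shift{\cofi{p}(V)}{}$ and in particular $t_p(\shift{V}{}) = t_p(V) - 1$; and the hypothesis $\tgen(V) < \trel(V)$ confines the torsion kernel $K$ to satisfy $t_p(K) - p \leq \trel(V) - 1$ for every $p$. Splitting the four-term sequence into $0 \to K \to V \to V/K \to 0$ and $0 \to V/K \to \shift{V}{} \to \deriv V \to 0$, threading them through the long exact sequences of $\cofi{\bullet}$, and performing a degree chase yields the recursive inequality
\[
 t_i(V) - i \;\leq\; \max\!\bigl(\trel(V) - 1,\ t_{i+1}(\deriv V) - i\bigr), \qquad i \geq 1.
\]

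Because $\tgen < \trel$ is preserved under $\deriv$ (both invariants drop by one), the recursion iterates to give, for each $a \geq 0$,
\[
 t_i(V) - i \;\leq\; \max\!\bigl(\trel(V) - 1,\ t_{i+a}(\deriv^a V) - i\bigr).
\]
The final step is to bound $t_{i+a}(\deriv^a V)$ by $\deg\!\bigl(\co_i^{\deriv^a}(V)\bigr) + a$ via a Grothendieck-type spectral sequence with $E_2^{p,q} = \cofi{p}\!\bigl(\co_q^{\deriv^a}(V)\bigr)$, which is available because $\deriv^a$ preserves projective $\FI$-modules (one has $\deriv M(d) = M(d-1)$, hence $\deriv^a$ sends projectives to $\cofi{0}$-acyclics). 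Taking the supremum over $a$ converts this residual bound into $\width{i}{V} - i$, completing the argument.

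The principal obstacles are (i) the degree chase, in particular showing that the torsion kernel $K$ contributes only $\trel(V) - 1$ --- without the strict gap $\tgen(V) < \trel(V)$, $K$ could have regularity as large as $\width{1}{V} - 1$, which would reintroduce the $i = 1$ width term into the maximum and spoil the bound; and (ii) the spectral-sequence identification of the residual $t_{i+a}(\deriv^a V)$ with $\deg(\co_i^{\deriv^a}(V)) + a$, which requires a careful analysis of the diagonal $p + q = i + a$ and a comparison between $\cofi{\bullet}(\deriv^a V)$ and the derived functors of the composite $\cofi{0} \circ \deriv^a$ to isolate the $q = i$ contribution.
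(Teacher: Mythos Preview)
The paper does not itself prove this statement; it is cited from \cite{ce-homology}. Evaluating your outline on its own terms, there is a genuine gap. Your claim that $\tgen(V) < \trel(V)$ forces $t_{p}(K) - p \leq \trel(V) - 1$ for all $p$ is not justified and is in fact false in general: since $K = \kert(V)$ is torsion of finite degree one has $\reg(K) = \deg K = h^{0}(V)$, and even under the strict-gap hypothesis the available bound is only $h^{0}(V) \leq \tgen(V) + \trel(V) - 1$ (a consequence of Theorem~\ref{width-bound}), which exceeds $\trel(V) - 1$ whenever $\tgen(V) \geq 1$. Running your recursion honestly, the terms that accumulate are $h^{0}(\deriv^{\!j}V) + j$ for $j \geq 0$; bounding each of these via Proposition~\ref{ce-seq} yields exactly $\reg(V) \leq \width{1}{V} - 1$. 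The $i = 1$ width is not eliminated by this route, so you recover only the weaker inequality already implicit in the second of the three quoted theorems, not the one you are aiming for. Your own obstacle~(i) is therefore not a difficulty to be overcome but a counterexample to the step.

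The spectral-sequence step has the same defect in a different guise. The Grothendieck sequence $E^{2}_{p,q} = \cofi{p}\!\bigl(\co_{q}^{\deriv^{\!a}}(V)\bigr)$ abuts to $L_{p+q}\bigl(\cofi{0}\circ\deriv^{\!a}\bigr)(V) \cong \shift{\cofi{p+q}(V)}{a}$, so its target is a shift of $\cofi{n}(V)$, not $\cofi{n}(\deriv^{\!a}V)$; the latter sits on the $q = 0$ row, and the edge map runs from the abutment into it without bounding its degree from above. If instead you use the spectral sequence to bound $t_{n}(V)$ directly, the $q \geq 1$ rows contribute $\max_{1 \leq q \leq n}\bigl(\width{q}{V} - q\bigr)$ to the estimate, and the $q = 1$ term again survives. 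Isolating the single $q = i$ entry on the diagonal, as you propose in obstacle~(ii), does not help: that entry is a fellow subquotient of the abutment alongside $E^{2}_{n,0}$, not something that bounds it.
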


These are what partially led me to think something like Theorem \ref{width} could be true, collapsing the inequalities in the above two results to equalities. It is also perhaps worth mentioning that combining \cite[Theorem 4.7, part (2)]{ramos-fig}, \cite[Theorem A]{bahran-reg}, \cite[Corollary 2.9]{bahran-reg} with Theorem \ref{width}, the bound in Theorem \ref{width-bound} can be sharpened into the following:

\begin{thm}
 Given integers $a,b \geq -1$ and $i \geq 1$, we have
 \[ 
  \max\!\left\{\!
\width{i}{V} \!: \!\!
\begin{array}{l}
 \text{$V$ is an $\FI$-module with}
 \\
 \tgen(V) \leq a \, , \trel(V) \leq b 
\end{array}
 \!\!\right\} \,=\, 
\begin{cases}
 -\infty & \text{if $a=-1$ or $b \leq 0$,} \\
 i + a + b  - 1 \, & \text{if\, $0 \leq a < b$,} \\
 i + 2b - 2 & \text{if\, $a \geq b \geq 1$.} 
\end{cases}  
 \]
\end{thm}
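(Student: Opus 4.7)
The plan is to convert the maximization problem into one about regularity via Theorem \ref{width}, and then invoke the sharp bounds and extremal constructions already present in the cited literature.

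First, for any $\FI$-module $V$ presented in finite degrees, Theorem \ref{width} identifies $\width{i}{V}$ with $\reg(V) + i$ whenever $V$ is not $\cofi{0}$-acyclic. For $\cofi{0}$-acyclic $V$ I would verify separately that $\width{i}{V} = -\infty$ for every $i \geq 1$: the derivative functor preserves the $\cofi{0}$-acyclic class and is itself acyclic on it, so every term in the set defining $\width{i}{V}$ vanishes and the supremum is taken over the empty set. Hence the desired maximum equals $i$ plus the supremum of $\reg(V)$ over non-$\cofi{0}$-acyclic $V$ satisfying $\tgen(V) \leq a$ and $\trel(V) \leq b$, understood as $-\infty$ if no such $V$ exists.

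The upper bound on $\reg(V)$ is then delivered by \cite[Theorem A]{bahran-reg}: one has $\reg(V) \leq a + b - 1$ when $a < b$, and $\reg(V) \leq 2b - 2$ when $a \geq b \geq 1$. The boundary regimes $a = -1$ and $b \leq 0$ require separate treatment. In the case $a = -1$, $\tgen(V) = -1$ forces $\cofi{0}(V) = 0$ and hence $V = 0$ by a standard Nakayama-style induction on the rank of the underlying sets. The case $b \leq 0$ forces every $V$ in the relevant class to be $\cofi{0}$-acyclic; this is essentially the content of \cite[Corollary 2.9]{bahran-reg}, together with the classical fact that $\trel(V) = -1$ already implies $\cofi{0}$-acyclicity. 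Either way, both boundary regimes contribute $-\infty$ to the supremum.

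For matching sharpness, I would invoke \cite[Theorem 4.7, part (2)]{ramos-fig}, which constructs $\FI$-modules with $\tgen = a$, $\trel = b$, and $\reg = a + b - 1$ in the range $a < b$, and the construction appearing alongside \cite[Corollary 2.9]{bahran-reg} (combined with the sharp bound in \cite[Theorem A]{bahran-reg}), which supplies modules attaining $\reg = 2b - 2$ in the range $a \geq b \geq 1$. Applying the identity $\width{i}{V} = \reg(V) + i$ from Theorem \ref{width} to any such extremal $V$ realizes the claimed maximum of $\width{i}{V}$. The main conceptual step---translating the derivative-functor invariant $\width{i}$ into the homological invariant $\reg$---is already dispatched by Theorem \ref{width}. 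What remains is bookkeeping: confirming that the boundary regimes $a = -1$ and $b \leq 0$ really do collapse to $-\infty$, and verifying that the extremal modules from the cited sources realize $\tgen$ and $\trel$ exactly, not merely bounded above by $a$ and $b$.
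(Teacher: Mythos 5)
Your proposal is correct and follows exactly the route the paper intends: the paper gives no written-out proof of this statement, only the instruction to combine \cite[Theorem 4.7, part (2)]{ramos-fig}, \cite[Theorem A]{bahran-reg}, \cite[Corollary 2.9]{bahran-reg} with Theorem \ref{width}, and your argument is a faithful fleshing-out of precisely that recipe (translation of $\width{i}{V}$ into $\reg(V)+i$, the degenerate cases $a=-1$ and $b\leq 0$ via $\cofi{0}$-acyclicity, and sharpness from the cited extremal constructions).
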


Another result which can be interpreted in terms of $\width{1}{V}$ is about polynomial conditions on $\FI$-modules:

\begin{thm}[{\cite[proof of Proposition 4.18]{rw-wahl-stab}}] \label{rww-width}
 Let $t,w \geq -1$ be integers and $V$ an $\FI$-module with $\tgen(V) \leq t$ and $\width{1}{V} \leq w$. Then in the sense of \emph{\cite[Definition 4.10]{rw-wahl-stab}}, $V$ is of degree $t$ at $w$. 
\end{thm}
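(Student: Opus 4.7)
The plan is to induct on $t \geq -1$, using the recursive definition of ``degree $t$ at $w$'' from \cite[Definition 4.10]{rw-wahl-stab}: $V$ has degree $-1$ at $w$ iff $V$ is supported in sufficiently low degrees determined by $w$, and for $t \geq 0$, $V$ has degree $t$ at $w$ iff $\kert(V) := \ker(\iota_V \colon V \to \shift{V}{})$ vanishes in sufficiently high degrees and $\deriv(V)$ has degree $t - 1$ at $w$. The base case $t = -1$ is immediate: $\tgen(V) \leq -1$ means $\cofi{0}(V) = 0$, so Nakayama's lemma for $\FI$-modules forces $V = 0$, which vacuously has degree $-1$ at any $w$.

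For the inductive step, assume the statement for $t - 1$ and all $w$, and let $V$ satisfy $\tgen(V) \leq t$ and $\width{1}{V} \leq w$. A direct computation on representable $\FI$-modules $M(S)$ yields $\shift{M(S)}{} \cong M(S) \oplus \bigoplus_{s \in S} M(S \setminus \{s\})$, so $\iota$ is split injective on every projective, and after taking projective resolutions one obtains the identification $\kert = \co_{1}^{\deriv}$ of functors on $\FI$-modules. The definition of $\width{1}{V} \leq w$ with $a = 1$ then yields $\deg \kert(V) + 1 \leq w$ whenever $\kert(V) \neq 0$, so $\kert(V)$ vanishes in all degrees $\geq w$, matching the required vanishing threshold.

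It remains to verify that $\deriv(V)$ has degree $t - 1$ at $w$, which by induction reduces to showing $\tgen(\deriv V) \leq t - 1$ and $\width{1}{\deriv V} \leq w$. The former is standard (again visible from the same representable formula). For the latter, the crucial observation is that $\deriv$ preserves projectives, which enables the Grothendieck spectral sequence for the composition $\deriv^{a} \circ \deriv = \deriv^{a+1}$ for every $a \geq 0$:
\[
E^{2}_{p,q} \;=\; \co_{p}^{\deriv^{\!a}}\!\bigl( \co_{q}^{\deriv}(V) \bigr) \;\Longrightarrow\; \co_{p+q}^{\deriv^{\!a+1}}(V) \, .
\]
Because $\co_{q}^{\deriv} = 0$ for $q \geq 2$, only the rows $q = 0$ and $q = 1$ contribute, and inspecting the bottom piece of the resulting filtration on $\co_{1}^{\deriv^{\!a+1}}(V)$ yields a natural surjection
$\co_{1}^{\deriv^{\!a+1}}(V) \twoheadrightarrow \co_{1}^{\deriv^{\!a}}(\deriv V)$. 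Consequently
\[
\deg \co_{1}^{\deriv^{\!a}}(\deriv V) + a \;\leq\; \deg \co_{1}^{\deriv^{\!a+1}}(V) + (a+1) - 1 \;\leq\; \width{1}{V} - 1 \;\leq\; w - 1 \, ,
\]
and taking the supremum over $a \geq 0$ closes the induction with $\width{1}{\deriv V} \leq w - 1 \leq w$.

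The main obstacle I foresee is the spectral-sequence step: confirming that $\deriv$ preserves projectives (via the representable computation) and correctly extracting the natural surjection from the two-row filtration without indexing errors. The remaining ingredients -- Nakayama's lemma, the generation-degree bound $\tgen(\deriv V) \leq \tgen(V) - 1$, and the identification $\kert = \co_{1}^{\deriv}$ -- are all standard consequences of the explicit formula $\deriv M(S) \cong \bigoplus_{s \in S} M(S \setminus \{s\})$ on representables.
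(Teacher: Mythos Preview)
The paper does not supply its own proof of this theorem; it is quoted as a result extracted from \cite[proof of Proposition 4.18]{rw-wahl-stab} and used only as context in the introduction. So there is nothing in the present paper to compare your argument against.

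That said, your argument is sound and self-contained. The identification $\kert \cong \co_{1}^{\deriv}$ is exactly the $a=i=1$ case of Proposition~\ref{ce-seq}, and the key inequality $\width{1}{\deriv V} \leq \width{1}{V} - 1$ drops out of the Grothendieck spectral sequence for $\deriv^{a}\circ\deriv$ just as you describe: since $\deriv$ preserves projectives and has vanishing higher derived functors in degrees $\geq 2$, the two-row collapse gives the edge surjection $\co_{1}^{\deriv^{a+1}}(V)\twoheadrightarrow \co_{1}^{\deriv^{a}}(\deriv V)$ from the five-term sequence. One small point to tighten: you paraphrase the recursive clause of \cite[Definition 4.10]{rw-wahl-stab} as requiring $\deriv V$ to have degree $t-1$ \emph{at $w$}, whereas the actual definition shifts the threshold (to $w-1$). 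This does not damage your proof---you in fact establish the stronger bound $\width{1}{\deriv V}\leq w-1$, which feeds correctly into either formulation---but you should state the definition precisely rather than hedging with ``sufficiently high degrees'' and ``matching the required vanishing threshold''.
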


Due to Theorem \ref{rww-width}, I had initially thought that establishing Theorem \ref{width} for $i=1$ would be a necessary step to prove the characterization of polynomiality in \cite[Theorem B]{bahran-polynomial}, but later found a more direct argument during writing that paper. The current paper is a result of revisiting the invariants $\width{i}{V}$.

\section{Homological algebra of $\FI$-modules}
\subsection{Preliminaries}
In this section we recall relevant homological results from the $\FI$-module literature. The first one is a short exact sequence that facilitates inductive arguments about the derived functors $\co_{i}^{\deriv^{\!a}}$.

\paragraph{The kernel functor.} Recall from the introduction that the inclusions $S \emb S \sqcup \{*\}$ for all finite sets $S$ define  a natural transformation 
\[
 \iota \colon \id_{\lMod{\FI}} \to \shift{}{} \, ,
\]
whose cokernel is the derivative functor $\deriv$. For its kernel, we write $\kert := \ker(\iota)$: it is a left exact functor
\[
 \kert \colon \lMod{\FI} \rarr \lMod{\FI} \, .
\]

\begin{prop}[{\cite[(18)]{ce-homology}}] \label{ce-seq} For every $\FI$-module $V$ and integers $i,a \geq 1$, there is a short exact sequence
\begin{align*}
 0 \rarr \deriv\!\left(
 \!\co_{i}^{\deriv^{\!a-1}}\!(V)
 \right) \rarr \co_{i}^{\deriv^{\!a}}\!(V) 
 \rarr \kert\!\left(
 {\co_{i-1}^{\deriv^{\!a-1}}}(V) 
 \right)\rarr 0
\end{align*}
of $\FI$-modules.
\end{prop}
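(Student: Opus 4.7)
The plan is to realize the required short exact sequence as a segment of the long exact homology sequence of a short exact sequence of chain complexes. Fix a projective resolution $P_\bullet \to V$ by free $\FI$-modules and set $Q_\bullet := \deriv^{\!a-1}(P_\bullet)$, so that by definition $H_i(Q_\bullet) = \co_i^{\deriv^{\!a-1}}(V)$ and $H_i(\deriv(Q_\bullet)) = \co_i^{\deriv^{\!a}}(V)$. The natural short exact sequence of complexes to consider is
\[
0 \to Q_\bullet \xrightarrow{\iota} \shift{Q_\bullet}{} \to \deriv(Q_\bullet) \to 0.
\]

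Three compatibilities power the argument. First, $\shift{}{}$ is exact (being precomposition with $-\sqcup\{*\} \colon \FI \to \FI$), hence commutes with homology, giving $H_i(\shift{Q_\bullet}{}) = \shift{\co_i^{\deriv^{\!a-1}}(V)}{}$. Second, the derivative preserves free $\FI$-modules since $\deriv(M(T)) \cong \bigoplus_{t \in T} M(T \smallsetminus \{t\})$, so by induction $Q_\bullet$ is degreewise free. Third, on every free $\FI$-module $\iota$ is a split monomorphism, so $\kert$ vanishes on each term of $Q_\bullet$ and the displayed sequence of complexes is genuinely short exact.

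The associated long exact sequence on homology then reads
\[
\cdots \to \co_i^{\deriv^{\!a-1}}(V) \xrightarrow{\iota} \shift{\co_i^{\deriv^{\!a-1}}(V)}{} \to \co_i^{\deriv^{\!a}}(V) \to \co_{i-1}^{\deriv^{\!a-1}}(V) \xrightarrow{\iota} \shift{\co_{i-1}^{\deriv^{\!a-1}}(V)}{} \to \cdots,
\]
and splicing using $\coker(\iota_W) = \deriv(W)$ and $\ker(\iota_W) = \kert(W)$ for any $\FI$-module $W$ immediately extracts the asserted short exact sequence around $\co_i^{\deriv^{\!a}}(V)$. The only delicate point is the termwise injectivity of $\iota$ on free modules, but this reduces to the evident injectivity of $\FI(T, S) \emb \FI(T, S \sqcup \{*\})$, so I do not expect a serious obstacle.
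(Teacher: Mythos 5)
Your argument is correct: the paper does not prove this proposition but cites it from Church--Ellenberg, and your proof (free resolution, exactness of the shift functor, the fact that $\deriv$ preserves free modules while $\kert$ kills them, then the long exact homology sequence of $0 \to Q_\bullet \to \shift{Q_\bullet}{} \to \deriv(Q_\bullet) \to 0$) is essentially the derivation given in that reference. No gaps worth flagging beyond the naturality identification $H_i(\shift{Q_\bullet}{}) \cong \shift{\co_i^{\deriv^{a-1}}(V)}{}$ compatible with $\iota$, which you implicitly and correctly use.
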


\paragraph{Torsion and local cohomology.} We say an $\FI$-module $V$ is \textbf{torsion} if for every finite set $S$ and $x \in V_{S}$, there is an injection $\alpha \colon S \emb T$ such that $V_{\alpha}(x) = 0 \in V_{T}$. We write
\begin{align*}
 \locoh{0} \colon \lMod{\FI} \rarr \lMod{\FI}
\end{align*}
for the functor which assigns an $\FI$-module its largest torsion $\FI$-submodule; it is left exact. For each $j \geq 0$, the $j$-th \textbf{local cohomology} functor is the $j$-th derived functor $\locoh{j} := \operatorname{R}^{j}\!\locoh{0}$ of $\locoh{0}$, for which we set 
\begin{align*}
 h^{j}(V) &:= \deg(\locoh{j}(V)) 
 \\
 &\in \{-1,0,1,\dots\} \cup \{\infty\} \, 
\end{align*}
for every $\FI$-module $V$.


We recall the characterization of regularity via local cohomology, which then justifies the well-definition of the critical index.

\begin{thm}[{\cite[Theorem 1.1, Remark 1.3]{nss-regularity}}]\label{nss}
 Let $V$ be an $\FI$-module presented in finite degrees which is not $\cofi{0}$-acyclic. Then the maximum of the finite set
\begin{align*}
 \empt \neq \{h^{j}(V) + j : h^{j}(V) \geq 0\} \subseteq \zz_{\geq0}
\end{align*} 
is equal to $\reg(V)$.
\end{thm}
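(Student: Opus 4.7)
The plan is to establish both inequalities $\max\{h^{j}(V) + j : h^{j}(V) \geq 0\} \leq \reg(V)$ and $\max\{h^{j}(V) + j : h^{j}(V) \geq 0\} \geq \reg(V)$, together with non-emptiness of the set on the left.

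For the upper bound, I would start with a finite semi-induced resolution $P_{\bullet} \to V$, available since $V$ is presented in finite degrees by Nagpal's shift theorem. Semi-induced $\FI$-modules are $\locoh{q}$-acyclic for $q \geq 1$, so the hyperhomology spectral sequence obtained by applying $\locoh{*}$ to $P_{\bullet}$ degenerates favorably, computing $\locoh{*}(V)$ from the torsion submodules $\locoh{0}(P_{p})$ of the $P_{p}$. Degree bookkeeping then relates these torsion degrees to $t_{i}(V) = \deg\!\cofi{i}(V)$, yielding $h^{j}(V) + j \leq \max\{t_{i}(V) - i : i \geq 1\} = \reg(V)$ for each $j$.

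For the lower bound and non-emptiness, I would induct on $\reg(V)$, first splitting off torsion via the short exact sequence $0 \to \locoh{0}(V) \to V \to V' \to 0$ and then, for torsion-free $V'$, using $0 \to V' \to \shift{V'}{} \to \deriv(V') \to 0$ to reduce to $\deriv(V')$. The base cases are torsion modules (where $h^{0}(V) = \deg(V) = \reg(V)$ directly realizes the maximum) and semi-induced modules (excluded by hypothesis, since they have vanishing local cohomology and $\reg = -\infty$). The inductive step exploits that torsion commutes with shift, giving $\locoh{j}(\shift{V}{}) = \shift{\locoh{j}(V)}{}$, and that both shift and derivative reduce $\reg$ and the local cohomology bookkeeping $h^{j} + j$ by one in a compatible way, allowing the extremal invariant to be tracked through the reduction.

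The principal obstacle is to guarantee that no cancellation occurs at the extremal bidegree in the spectral sequence (for the upper bound) and that the inductive descent produces a surviving extremal local cohomology class of the correct degree (for the lower bound). This is where the non-$\cofi{0}$-acyclicity hypothesis becomes essential: it rules out the semi-induced case --- precisely when the set of local cohomology degrees would be empty --- and ensures the inductive descent terminates at a module with nontrivial torsion that directly witnesses the regularity.
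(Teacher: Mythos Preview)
The paper does not prove this theorem. It is quoted from \cite[Theorem 1.1, Remark 1.3]{nss-regularity} and used as a black box; no argument for it appears anywhere in the present paper. So there is no ``paper's own proof'' to compare your proposal against.

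For what it is worth, your sketch is broadly in the spirit of the Nagpal--Sam--Snowden argument: the upper bound via a finite filtration/resolution by semi-induced modules (which are $\locoh{*}$-acyclic in positive degrees) and a spectral sequence, and the lower bound via the torsion/torsion-free d\'evissage together with the behavior of local cohomology under shift and derivative. As written it is only an outline --- the ``degree bookkeeping'' in the upper bound and the ``no cancellation at the extremal bidegree'' step both require genuine work that you have not supplied --- but the strategy is the right one and would lead to a correct proof if carried out. Since the present paper simply imports the result, there is nothing further to compare.
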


\begin{defn} \label{defn:crit}
 For an $\FI$-module presented in finite degrees which is not $\cofi{0}$-acyclic, we define its \textbf{critical index} as
\begin{align*}
 \crit(V) := 
 \min\!\left\{
 j : h^{j}(V) \geq 0 \text{ and }
 h^{j}(V) + j = \reg(V)
 \right\} \, .
\end{align*}
\end{defn}

\subsection{Proving Theorem \ref{width}} \label{proof}
In this section we prove Theorem \ref{width}. As ingredients towards that end, we relate the vanishing degrees of the derived functors $\co_{i}^{\deriv^{\!a}}$ with the local cohomology degrees in Proposition \ref{careful} (the most technical part of the paper) and identify non-vanishing degrees for these functors in Proposition \ref{all-iso} using the critical index.

\begin{prop} \label{careful}
 For an $\FI$-module $V$ presented in finite degrees, the following hold: 
\begin{birki}
 \item For every $a \geq 0$ and $u \geq 0$, if $h^{u}(\deriv^{\!a}V) \geq 0$ then we have 
\begin{align*}
 h^{u}(\deriv^{\!a}V) + u + a
 \leq \max\!\left\{h^{j}(V) + j : h^{j}(V) \geq 0 \text{ and }
  0 \leq j \leq u + a\right\} \, .
\end{align*}
 \item For every $a \geq 1$ and $i \geq 1$, if $\co_{i}^{\deriv^{\!a}}(V) \neq 0$ then we have 
\begin{align*}
  \deg \co_{i}^{\deriv^{\!a}}(V) + a 
  \leq \max\!\left\{h^{j}(V) + j + i : h^{j}(V) \geq 0 \text{ and }
  0 \leq j \leq a-i\right\} \, .
\end{align*}
\end{birki}
\end{prop}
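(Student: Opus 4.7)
Induct on $a$. The base case $a = 0$ is immediate, as the right-hand side contains the term $j = u$. For the inductive step, apply the $a = 1$ instance of the inequality (treated as a black box for now) to the module $\deriv^{a-1} V$, yielding
\[
h^u(\deriv^a V) \,\leq\, \max\!\bigl( h^u(\deriv^{a-1} V) - 1,\; h^{u+1}(\deriv^{a-1} V) \bigr),
\]
and then invoke the inductive hypothesis at $a - 1$ on each of the two right-hand terms; the arithmetic gives the required bound. So the entire content of Part~(1) is the $a = 1$ key inequality: for every $u \geq 0$, $h^u(\deriv V) \leq \max( h^u(V) - 1, h^{u+1}(V))$. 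To establish this, split the four-term exact sequence $0 \to \kert V \to V \to \shift{V}{} \to \deriv V \to 0$ into $0 \to \kert V \to V \to I \to 0$ and $0 \to I \to \shift{V}{} \to \deriv V \to 0$, with $I$ the image of $\iota_V$, and pass to local cohomology. Since $\kert V$ is torsion, the first sequence yields $\locoh{j}(V) \cong \locoh{j}(I)$ for $j \geq 1$. For the second, the key observation is that the shift functor commutes with every local cohomology functor: the $j = 0$ case is a direct check on torsion elements --- an injection $S \sqcup \{*\} \hookrightarrow T'$ killing $x \in V_{S \sqcup \{*\}}$ can always be factored through one of the special form $\alpha \sqcup \operatorname{id}_{\{*\}}$ into $T \sqcup \{*\}$, and conversely --- while the $j \geq 1$ case follows from $\shift{}{}$ being exact and preserving injective $\FI$-modules. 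Substituting $\locoh{j}(\shift{V}{}) \cong \shift{\locoh{j}(V)}{}$ into the LES of the second sequence and reading off the relevant portion yields, for every $u \geq 0$, the clean short exact sequence
\[
0 \to \deriv \locoh{u}(V) \to \locoh{u}(\deriv V) \to \kert \locoh{u+1}(V) \to 0,
\]
from which the key inequality follows using $\deg \deriv W \leq \deg W - 1$ and $\deg \kert W \leq \deg W$.

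\textbf{Part (2).} Induct on $a$. For $a = 1$, Proposition~\ref{ce-seq} collapses via $\co_i^{\id}(V) = 0$ for $i \geq 1$ to give $\co_1^{\deriv}(V) \cong \kert V \subseteq \locoh{0}(V)$, so $\deg \co_1^{\deriv}(V) \leq h^0(V)$, matching the claimed bound at $i = 1$; for $i \geq 2$, $\co_i^{\deriv}(V) = 0$ and the claim is vacuous. For $a \geq 2$, Proposition~\ref{ce-seq} together with the trivial estimates $\deg \deriv W \leq \deg W - 1$ and $\deg \kert W \leq \deg W$ gives
\[
\deg \co_i^{\deriv^a}(V) \,\leq\, \max\!\bigl( \deg \co_i^{\deriv^{a-1}}(V) - 1,\; \deg \co_{i-1}^{\deriv^{a-1}}(V) \bigr).
\]
After adding $a$, the first term is bounded by the inductive hypothesis at $(a-1, i)$; the second term is bounded by the inductive hypothesis at $(a-1, i-1)$ when $i \geq 2$, and when $i = 1$ it equals $h^0(\deriv^{a-1} V) + a$, controlled by Part~(1) with $u = 0$. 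A one-line index shift matches the resulting bound with the right-hand side of Part~(2).

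\textbf{Main obstacle.} The crux of the proof is the short exact sequence $0 \to \deriv \locoh{u}(V) \to \locoh{u}(\deriv V) \to \kert \locoh{u+1}(V) \to 0$, whose uniform-in-$u$ validity rests on $\shift{}{}$ commuting with $\locoh{j}$ for every $j \geq 0$. The $j = 0$ case, though elementary, needs a careful comparison between the two a~priori distinct notions of torsion living inside $\shift{V}{}$. Once this ingredient is in hand, both parts of the proposition reduce to elementary induction.
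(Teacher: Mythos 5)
Your argument is correct and follows essentially the same route as the paper: both parts use the same inductions, with Part (2) resting on Proposition \ref{ce-seq} exactly as in the text and Part (1) on the exact sequence linking $\locoh{u}(\deriv V)$ to $\locoh{u}(V)$ and $\locoh{u+1}(V)$. The only difference is that the paper simply cites this sequence as \cite[Proposition 2.6]{bahran-polynomial}, whereas you re-derive it (in the sharpened form $0 \to \deriv\locoh{u}(V) \to \locoh{u}(\deriv V) \to \kert\!\left(\locoh{u+1}(V)\right) \to 0$, via the commutation of $\shift{}{}$ with local cohomology and the $\locoh{0}$-acyclicity of torsion modules) --- a valid self-contained substitute, though the extra precision of the short exact sequence beyond the three-term version is not actually needed for the degree bounds.
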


\begin{proof}
 (1) We employ induction on $a$: the base case $a=0$ is immediate so assume $a \geq 1$. By \cite[Proposition 2.6]{bahran-polynomial} applied to $\deriv^{\!a-1}V$, there is an exact sequence
\begin{align*}
 \shift{\!\locoh{u}\!\left(\deriv^{\!a-1}V\right)}{}
 \rarr
 \locoh{u}\!\left(\deriv^{\!a}V\right)
 \rarr
 \locoh{u+1}\!\left(\deriv^{\!a-1}V\right)
\end{align*}
of $\FI$-modules each of finite degree. If
$ \deg \locoh{u}\!\left(\deriv^{\!a-1}V\right) = h^{u}(\deriv^{\!a-1}V) \geq 0
$, then
\begin{align*}
 \deg \shift{\!\locoh{u}\!\left(\deriv^{\!a-1}V\right)}{} + u + a
 &= 
 \left(\deg \locoh{u}\!\left(\deriv^{\!a-1}V\right) -1 \right) + u + a
 \\
 &= h^{u}(\deriv^{\!a-1}V) + u + a - 1
 \\
 &\leq \max\!\left\{h^{j}(V) + j : h^{j}(V) \geq 0 \text{ and }
  0 \leq j \leq u + a-1\right\}
\end{align*}
by the induction hypothesis.  If
$ \deg \locoh{u+1}\!\left(\deriv^{\!a-1}V\right) = h^{u+1}(\deriv^{\!a-1}V) \geq 0
$, then
\begin{align*}
 \deg \locoh{u+1}\!\left(\deriv^{\!a-1}V\right) + u + a
 &= 
 h^{u+1}(\deriv^{\!a-1}V) + u + a
 \\
 &= h^{u+1}(\deriv^{\!a-1}V) + (u+1) + (a - 1)
 \\
 &\leq \max\!\left\{h^{j}(V) + j : h^{j}(V) \geq 0 \text{ and }
  0 \leq j \leq u + a\right\}
\end{align*}
again by the induction hypothesis. Therefore if $\deg \locoh{u}\!\left(\deriv^{\!a}V\right) = h^{u}(\deriv^{\!a}V) \geq 0$, by the exact sequence we have started with, we have that either
\begin{align*}
 \deg \locoh{u}\!\left(\deriv^{\!a-1}V\right) \geq 0
 \quad \text{or} \quad
 \deg \locoh{u+1}\!\left(\deriv^{\!a-1}V\right) \geq 0 \, .
\end{align*}
Thus 
\begin{align*}
 h^{u}(\deriv^{\!a}V) + u + a &\leq \max\!\left\{
 \deg \shift{\!\locoh{u}\!\left(\deriv^{\!a-1}V\right)}{} + u + a,\,
 \deg \locoh{u+1}\!\left(\deriv^{\!a-1}V\right) + u + a
 \right\}
 \\
 &\leq \max\!\left\{h^{j}(V) + j : h^{j}(V) \geq 0 \text{ and }
  0 \leq j \leq u + a\right\}
\end{align*}
as desired.

(2) Let us call the desired statement $P(a,i)$ and employ induction on $a+i$. For $a+i = 2$, we have $a=i=1$ so that the higher left derived functors of $\deriv^{a-1} = \id_{\lMod{\FI}}$ vanish, and Proposition \ref{ce-seq} reduces to an isomorphism $
 \co_{1}^{\deriv}\!(V) 
 \cong \kert(V) 
$. Indeed here
\begin{align*}
  \deg \co_{1}^{\deriv}(V) + 1 = \deg(\kert(V)) + 1 = h^{0}(V) + 1 \, ,
\end{align*}
verifying $P(1,1)$.
For $a+i \geq 3$, there are three cases:
\begin{itemize}
 \item $a=1$ and $i \geq 2$: the higher left derived functors of $\deriv^{a-1} = \id_{\lMod{\FI}}$ vanish, and Proposition \ref{ce-seq} reduces to the fact $
 \co_{i}^{\deriv}\!(V) = 0
$, therefore $P(1,i)$ is vacuously true.
\item $a \geq 2$ and $i=1$: By Proposition \ref{ce-seq} we have a short exact sequence
\begin{align*}
 0 \rarr \deriv\!\left(
 \co_{1}^{\deriv^{\!a-1}}\!(V)
 \right) \rarr \co_{1}^{\deriv^{\!a}}\!(V) 
 \rarr \kert\!\left(
 {\deriv^{\!a-1}}(V) 
 \right)\rarr 0 \, .
\end{align*}
If $\co_{1}^{\deriv^{\!a-1}}\!(V) \neq 0$, then 
\begin{align*}
 \deg \deriv\!\left(
 \co_{1}^{\deriv^{\!a-1}}\!(V)
 \right) + a
 &\leq \deg 
 \co_{1}^{\deriv^{\!a-1}}\!(V)
  + a-1
 \\
 &\leq \max\!\left\{h^{j}(V) + j + 1 : h^{j}(V) \geq 0 \text{ and }
 0 \leq j \leq a-2\right\}
\end{align*}
by the induction hypothesis $P(a-1,1)$. If $h^{0}\!\left(
 {\deriv^{\!a-1}}(V) 
 \right) \geq 0$, then
\begin{align*}
 \deg \kert\!\left(
 {\deriv^{\!a-1}}(V) 
 \right) + a 
 &= h^{0}\!\left(
 {\deriv^{\!a-1}}(V) 
 \right) + (a-1) + 1
 \\
 &\leq \max\!\left\{h^{j}(V) + j : h^{j}(V) \geq 0 \text{ and }
 0 \leq j \leq a-1\right\} + 1
\end{align*}
by part (1). Therefore if $\co_{1}^{\deriv^{\!a}}\!(V) \neq 0$, we have
\begin{align*}
 \deg \co_{1}^{\deriv^{\!a}}\!(V)
  + a
 &\leq \max\!\left\{h^{j}(V) + j + 1 : h^{j}(V) \geq 0 \text{ and }
 0 \leq j \leq a-1\right\} \, ,
\end{align*}
establishing $P(a,1)$.
\item $a \geq 2$ and $i \geq 2$: if $\co_{i}^{\deriv^{\!a-1}}\!(V) \neq 0$, then 
\begin{align*}
 \deg \deriv\!\left(
 \!\co_{i}^{\deriv^{\!a-1}}\!(V)
 \right) + a
 &\leq \deg \co_{i}^{\deriv^{\!a-1}}\!(V) + a - 1
 \\
 & \leq \max\!\left\{h^{j}(V) + j + i : h^{j}(V) \geq 0 \text{ and }
  0 \leq j \leq (a-1)-i\right\}
\end{align*}
by the induction hypothesis $P(a-1,i)$. If ${\co_{i-1}^{\deriv^{\!a-1}}}(V) \neq 0$, then
\begin{align*}
 &\,\,\,\,\,\,\,\deg \kert\!\left(
 {\co_{i-1}^{\deriv^{\!a-1}}}(V) 
 \right) + a 
 \\
 &\leq 
 \deg {\co_{i-1}^{\deriv^{\!a-1}}}(V) + a
 = \deg {\co_{i-1}^{\deriv^{\!a-1}}}(V) + (a-1) + 1
 \\
 &\leq \max\!\left\{h^{j}(V) + j + i-1 : h^{j}(V) \geq 0 \text{ and }
 0 \leq j \leq a-i\right\} + 1
\end{align*}
by the induction hypothesis $P(a-1,i-1)$. Therefore if $\co_{i}^{\deriv^{\!a}}(V) \neq 0$, by Proposition \ref{ce-seq} either $\co_{i}^{\deriv^{\!a-1}}\!(V) \neq 0$ or ${\co_{i-1}^{\deriv^{\!a-1}}}(V) \neq 0$, and 
\begin{align*}
  \deg \co_{i}^{\deriv^{\!a}}(V) + a &= \max\!\left\{
  \deg \deriv\!\left(
 \!\co_{i}^{\deriv^{\!a-1}}\!(V)
 \right) + a,\,
 \deg \kert\!\left(
 {\co_{i-1}^{\deriv^{\!a-1}}}(V) 
 \right) + a 
  \right\}
  \\
  &\leq \max\!\left\{h^{j}(V) + j + i : h^{j}(V) \geq 0 \text{ and }
  0 \leq j \leq a-i\right\},\,
\end{align*}
establishing $P(a,i)$.
\end{itemize}
\end{proof}

\begin{prop} \label{all-iso}
 Let $V$ be an $\FI$-module presented in finite degrees which is not $\cofi{0}$-acyclic. Setting $\rho := \reg(V)$ and $\gamma := \crit(V)$, the $\sym{\rho-\gamma}$-modules
\begin{birki}
 \item $\locoh{\gamma - j}(\deriv^{\!j}V)_{\rho-\gamma}$ for each $0 \leq j \leq \gamma$,
 \vspace{0.1cm}
 \item $\kert\!\left(\deriv^{\!\gamma}V\right)_{\rho-\gamma}$ ,
 \vspace{0.1cm}
 \item $\co_{i}^{\deriv^{\!\gamma+i}}(V)_{\rho-\gamma}$ for each $i \geq 1$,
\end{birki}
are all nonzero and pairwise isomorphic.
\end{prop}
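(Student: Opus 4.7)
The plan is to show that every $\sym{\rho - \gamma}$-module listed in the three families is isomorphic to $\locoh{\gamma}(V)_{\rho-\gamma}$, which is nonzero because the definition of the critical index forces $h^\gamma(V) = \rho - \gamma$. This object is family (1) at $j = 0$; since every morphism appearing below is a morphism of $\FI$-modules, all ensuing isomorphisms are automatically $\sym{\rho-\gamma}$-equivariant. So it suffices to construct an isomorphism chain linking family (3) to family (2), family (2) to family (1) at $j = \gamma$, and finally family (1) at $j = \gamma$ up to family (1) at $j = 0$, all evaluated in degree $\rho - \gamma$.

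For family (1) I iterate along the four-term segment
\begin{align*}
\shift{\locoh{\gamma-j}(\deriv^{j-1}V)}{} \to \locoh{\gamma-j}(\deriv^j V) \to \locoh{\gamma-j+1}(\deriv^{j-1}V) \to \shift{\locoh{\gamma-j+1}(\deriv^{j-1}V)}{}
\end{align*}
obtained (for $1 \leq j \leq \gamma$) by extending the three-term exact sequence used in the proof of Proposition \ref{careful}(1) one place further in the local cohomology long exact sequence of $0 \to W/\kert W \to \shift{W}{} \to \deriv W \to 0$ applied to $W = \deriv^{j-1}V$. The two outer terms vanish in degree $\rho - \gamma$: Proposition \ref{careful}(1) combined with the minimality of $\gamma = \crit(V)$ yields $h^{\gamma-j}(\deriv^{j-1}V) \leq \rho - \gamma$ and $h^{\gamma-j+1}(\deriv^{j-1}V) \leq \rho - \gamma$, so both shifted modules are zero in degree $\rho - \gamma + 1$. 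Hence $\locoh{\gamma-j}(\deriv^j V)_{\rho-\gamma} \cong \locoh{\gamma-j+1}(\deriv^{j-1}V)_{\rho-\gamma}$, and iteration identifies every member of family (1) with $\locoh{\gamma}(V)_{\rho-\gamma}$.

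For family (2) versus family (1) at $j = \gamma$, the inclusion $\kert \subseteq \locoh{0}$ is general; conversely, if $x \in \locoh{0}(\deriv^\gamma V)_{\rho-\gamma}$ is torsion, then $\iota(x)$ is also torsion in $\deriv^\gamma V$ but lies in degree $\rho - \gamma + 1$, which is above the top degree of $\locoh{0}(\deriv^\gamma V)$ (by Proposition \ref{careful}(1) at $u = 0$, $a = \gamma$, giving $h^0(\deriv^\gamma V) \leq \rho - \gamma$), forcing $\iota(x) = 0$. Thus $x \in \kert(\deriv^\gamma V)_{\rho-\gamma}$. For family (3) I invoke the short exact sequence of Proposition \ref{ce-seq} at $a = \gamma + i$:
\begin{align*}
 0 \to \deriv\!\left(\co_i^{\deriv^{\gamma+i-1}}V\right) \to \co_i^{\deriv^{\gamma+i}}V \to \kert\!\left(\co_{i-1}^{\deriv^{\gamma+i-1}}V\right) \to 0.
\end{align*}
Proposition \ref{careful}(2) gives $\deg \co_i^{\deriv^{\gamma+i-1}}V \leq \rho - \gamma$, so the leftmost term has degree $\leq \rho - \gamma - 1$ and vanishes in degree $\rho - \gamma$, producing the isomorphism with $\kert(\co_{i-1}^{\deriv^{\gamma+i-1}}V)_{\rho-\gamma}$. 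For $i = 1$ this right-hand side is literally family (2). For $i \geq 2$, Proposition \ref{careful}(2) again bounds $\deg \co_{i-1}^{\deriv^{\gamma+i-1}}V \leq \rho - \gamma$; using the elementary identity $\kert(W)_d = W_d$ whenever $\deg W \leq d$ (because then $W_{d+1} = 0$), this collapses to $\co_{i-1}^{\deriv^{\gamma+i-1}}V_{\rho-\gamma}$, which is family (3) at index $i - 1$. Cascading $i \to i - 1 \to \cdots \to 1$ then links family (3) to family (2).

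The main obstacle is family (1): both the systematic construction of the four-term local cohomology segment (beyond the three-term form invoked in Proposition \ref{careful}(1)) and the bookkeeping that simultaneously kills both outer terms in degree $\rho - \gamma$ hinge on the minimality clause built into the definition of $\crit(V)$, which supplies the strict inequality $h^k(V) + k < \rho$ for $k < \gamma$ that fails at $k = \gamma$.
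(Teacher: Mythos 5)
Your proof is correct, and it produces the same chain of degree-$(\rho-\gamma)$ isomorphisms as the paper, but by a partly different route in the most interesting place. The passage from family (3) down to family (2) is essentially identical to the paper's: the short exact sequence of Proposition \ref{ce-seq} at $a=\gamma+i$, the bound from Proposition \ref{careful}(2) with the minimality of $\gamma$ killing the subobject $\deriv\!\left(\co_{i}^{\deriv^{\!\gamma+i-1}}(V)\right)$ in degree $\rho-\gamma$, and the observation that $\kert(W)_{\rho-\gamma}=W_{\rho-\gamma}$ once $\deg W\leq\rho-\gamma$. The difference is in family (1) and its bridge to (2): the paper proves (1) by induction on $\gamma$, using \cite[Proposition 2.10]{bahran-polynomial} (that $\deriv V$ is again not $\cofi{0}$-acyclic with $\reg(\deriv V)=\rho-1$ and $\crit(\deriv V)=\gamma-1$) and applying the four-term sequence of \cite[Proposition 2.6]{bahran-polynomial} only once at the top, and it also uses Proposition 2.10 to get the exact value $h^{0}(\deriv^{\!\gamma}V)=\rho-\gamma$ for the (1)--(2) bridge. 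You instead apply the four-term segment at every step $1\leq j\leq\gamma$ and kill both outer terms directly from Proposition \ref{careful}(1): the left one via the strict bound $\max\{h^{k}(V)+k:\,k\leq\gamma-1\}\leq\rho-1$ coming from minimality of $\crit(V)$, the right one via Theorem \ref{nss} (here minimality is not actually needed, since $u+a=\gamma$ only gives the non-strict bound $\leq\rho$ --- a small correction to your closing remark, not a gap); and your (2)-versus-$\locoh{0}(\deriv^{\!\gamma}V)$ identification is an element-level argument needing only the upper bound $h^{0}(\deriv^{\!\gamma}V)\leq\rho-\gamma$. What each approach buys: yours is self-contained relative to Proposition \ref{careful} and Theorem \ref{nss} and dispenses with the external input \cite[Proposition 2.10]{bahran-polynomial} altogether, at the cost of re-running the outer-term bookkeeping at each $j$; the paper's induction is slightly shorter per step and leans on the $\reg$/$\crit$ calculus under $\deriv$ that the companion paper already provides. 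One presentational caveat: the four-term exact sequence you build from $0\to W/\kert W\to\shift{W}{}\to\deriv W\to 0$ is precisely what the cited \cite[Proposition 2.6]{bahran-polynomial} asserts, so you can simply quote it rather than re-derive it (your sketch implicitly uses that higher local cohomology of the torsion module $\kert W$ vanishes and that $\locoh{u}$ commutes with the shift, which is exactly the content packaged in that citation).
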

\begin{proof}
 By Definition \ref{defn:crit}, we have $h^{\gamma}(V) \geq 0$ and $h^{\gamma}(V) + \gamma = \rho$, so that 
\begin{align*}
 0 \leq \deg \locoh{\gamma}(V) = h^{\gamma}(V) = \rho-\gamma
\end{align*}
and hence $\locoh{\gamma}(V)_{\rho - \gamma} \neq 0$. Next we will show that 
\begin{align*}
 \locoh{\gamma}(V)_{\rho - \gamma} 
 \cong 
 \locoh{\gamma - j}(\deriv^{\!j}V)_{\rho-\gamma}
\end{align*}
for every $0 \leq j \leq \gamma$ by induction on $\gamma = \crit(V)$. The base case $\gamma = 0$ is a tautology, so we assume $\gamma \geq 1$. By \cite[Proposition 2.10]{bahran-polynomial} the $\FI$-module $\deriv V$ is not $\cofi{0}$-acyclic with
\begin{align*}
 \reg(\deriv V) = \rho-1
 \quad \text{and} \quad
 \crit(\deriv V) = \gamma - 1 \, .
\end{align*}
Therefore we may apply the induction hypothesis to $\deriv V$ and get
\begin{align*}
 \locoh{\gamma - 1}(\deriv V)_{\rho - \gamma}
 &=
 \locoh{\gamma - 1}(\deriv V)_{(\rho-1) - (\gamma-1)} 
 \\ &\cong 
 \locoh{\gamma - 1 - j'}(\deriv^{\!j'}\deriv V)_{(\rho-1)-(\gamma-1)}
 \\ &=
 \locoh{\gamma - (j'+1)}(\deriv^{\!j'+1}V)_{\rho - \gamma}
\end{align*}
as $\sym{\rho-\gamma}$-modules for every $0 \leq j' \leq \gamma - 1$. Reindexing as $j = j'+1$,  
\begin{align*}
 \locoh{\gamma - 1}(\deriv V)_{\rho - \gamma} \cong \locoh{\gamma - j}(\deriv^{\!j}V)_{\rho - \gamma}
\end{align*}
for every $1 \leq j \leq \gamma$. By \cite[Proposition 2.6]{bahran-polynomial}, there is an exact sequence
\begin{align*}
 \shift{\!\locoh{\gamma-1}(V)}{} \rarr \locoh{\gamma-1}(\deriv V)
 \rarr \locoh{\gamma}(V) \rarr  \shift{\!\locoh{\gamma}(V)}{} 
\end{align*}
for which we have 
\begin{align*}
 \left(\shift{\!\locoh{\gamma-1}(V)}{}\right)_{\rho-\gamma} = \locoh{\gamma-1}(V)_{\rho-\gamma+1} = 0
\end{align*}
because $h^{\gamma-1}(V) + \gamma - 1 < \rho$, and
\begin{align*}
 \left( \shift{\!\locoh{\gamma}(V)}{} \right)_{\rho-\gamma}
 =
 \locoh{\gamma}(V)_{\rho-\gamma+1} = 0
\end{align*}
because $h^{\gamma}(V) + \gamma = \rho$ by the definition of $\gamma = \crit(V)$. Thus evaluating the exact sequence at degree $\rho-\gamma$ results in an isomorphism 
\begin{align*}
 \locoh{\gamma-1}(\deriv V)_{\rho-\gamma} 
 \cong \locoh{\gamma}(V)_{\rho-\gamma}
\end{align*}
of $\sym{\rho-\gamma}$-modules, completing the $j=0$ case of (1). 

For the isomorphism between the modules between (1) and (2), apply \cite[Proposition 2.6]{bahran-polynomial} to the $\FI$-module $\deriv^{\!\gamma} V$ and get an exact sequence 
\begin{align*}
 0 \rarr \kert\!\left(\deriv^{\!\gamma}V\right) 
 \rarr
 \locoh{0}\!\left(\deriv^{\!\gamma}V\right) 
 \rarr
 \shift{\!\locoh{0}\!\left(\deriv^{\!\gamma}V\right)}{} \, ,
\end{align*}
for which we have 
\begin{align*}
 \left(\shift{\!\locoh{0}\left(\deriv^{\!\gamma}V\right)}{}\right)_{\rho-\gamma} = \locoh{0}\!\left(\deriv^{\!\gamma}V\right)_{\rho-\gamma+1} = 0
\end{align*}
because by \cite[Proposition 2.10]{bahran-polynomial} we have $\crit(\deriv^{\!\gamma} V) = 0$ and
\begin{align*}
 h^{0}(\deriv^{\!\gamma} V) = \reg(\deriv^{\!\gamma} V) 
 =
 \reg(V) - \gamma = \rho - \gamma \, ,
\end{align*}
resulting in an isomorphism
\begin{align*}
 \kert\!\left(\deriv^{\!\gamma}V\right)_{\rho - \gamma} \cong
 \locoh{0}\!\left(\deriv^{\!\gamma}V\right)_{\rho - \gamma}
\end{align*}
of $\sym{\rho-\gamma}$-modules as desired.

Consider the short exact sequence
\begin{align*}
 0 \rarr \deriv\!\left(
 \!\co_{i}^{\deriv^{\!\gamma+i-1}}\!(V)
 \right) \rarr \co_{i}^{\deriv^{\!\gamma+i}}\!(V) 
 \rarr \kert\!\left(
 {\co_{i-1}^{\deriv^{\!\gamma+i-1}}}(V) 
 \right)\rarr 0 \tag{$\dagger$}
\end{align*}
of $\FI$-modules given by  Proposition \ref{ce-seq}. For $i=1$ it becomes
\begin{align*}
 0 \rarr \deriv\!\left(
 \co_{1}^{\deriv^{\!\gamma}}\!(V)
 \right) \rarr \co_{1}^{\deriv^{\!\gamma+1}}\!(V) 
 \rarr \kert\!\left(
 \deriv^{\!\gamma}(V) 
 \right)\rarr 0 \tag{$\dagger\dagger$} \, .
\end{align*}
Here either $\co_{1}^{\deriv^{\!\gamma}}\!(V) = 0$, or by Proposition \ref{careful} we have $\co_{1}^{\deriv^{\!\gamma}}\!(V) \neq 0$ and
\begin{align*}
  \deg \deriv\!\left(
 \co_{1}^{\deriv^{\!\gamma}}\!(V)
 \right) + \gamma 
 &\leq \deg \co_{1}^{\deriv^{\!\gamma}}(V) + \gamma - 1
  \\
  &\leq \max\!\left\{h^{j}(V) + j + 1 : h^{j}(V) \geq 0 \text{ and }
  0 \leq j \leq \gamma-1\right\} - 1
  \\
  &= \max\!\left\{h^{j}(V) + j : h^{j}(V) \geq 0 \text{ and }
  0 \leq j \leq \gamma-1\right\}
  \\
  &< \reg(V) = \rho
\end{align*}
by the definition of $\gamma = \crit(V)$. Thus we always have $\deg \deriv\!\left(
 \co_{1}^{\deriv^{\!\gamma}}\!(V)
 \right) < \rho - \gamma$ and $(\dagger\dagger)$ in degree $\rho-\gamma$ becomes an isomorphism
\begin{align*}
  \co_{1}^{\deriv^{\!\gamma+1}}\!(V)_{\rho-\gamma} \cong
  \kert\!\left(
 \deriv^{\!\gamma}(V) 
 \right)_{\rho - \gamma} 
\end{align*}
of $\sym{\rho-\lambda}$-modules, bridging the module in (2) with the one in (3) when $i=1$. We shall complete the proof by showing
\begin{align*}
  \co_{i}^{\deriv^{\!\gamma+i}}\!(V)_{\rho-\gamma} \cong
  \co_{i-1}^{\deriv^{\!\gamma+i-1}}\!(V)_{\rho-\gamma}
\end{align*}
for every $i \geq 2$. To that end either $\co_{i}^{\deriv^{\!\gamma+i-1}}\!(V) = 0$, or by part (2) of Proposition \ref{careful} we have $\co_{i}^{\deriv^{\!\gamma+i-1}}\!(V) \neq 0$ and
\begin{align*}
 \deg \deriv\!\left(
 \!\co_{i}^{\deriv^{\!\gamma+i-1}}\!(V)
 \right) + \gamma + i
 &\leq \deg \co_{i}^{\deriv^{\!\gamma+i-1}}(V) + \gamma + i - 1
  \\
  &\leq \max\!\left\{h^{j}(V) + j + i : h^{j}(V) \geq 0 \text{ and }
  0 \leq j \leq \gamma-1\right\}
  \\
  &= \max\!\left\{h^{j}(V) + j : h^{j}(V) \geq 0 \text{ and }
  0 \leq j \leq \gamma-1\right\} + i
  \\
  &< \reg(V) + i = \rho + i
\end{align*}
by the definition of critical index. In any case we have 
\begin{align*}
 \deg \deriv\!\left(
 \!\co_{i}^{\deriv^{\!\gamma+i-1}}\!(V)
 \right) < \rho - \gamma
\end{align*}
and hence evaluating $(\dagger)$ in degree $\rho-\gamma$ results in an isomorphism
\begin{align*}
 \co_{i}^{\deriv^{\!\gamma+i}}\!(V)_{\rho-\gamma} 
 \cong \kert\!\left(
 \co_{i-1}^{\deriv^{\!\gamma+i-1}}(V) 
 \right)_{\rho-\gamma}
\end{align*}
of $\sym{\rho-\gamma}$-modules. Here by part (2) of Proposition \ref{careful}, the $\FI$-module $W := \co_{i-1}^{\deriv^{\!\gamma+i-1}}(V)
$ satisfies
\begin{align*}
 \deg W + \gamma + i - 1
 &\leq \max\!\left\{
 h^{j}(V) + j + i - 1 : h^{j}(V) \geq 0 \text{ and }
 0 \leq j \leq \gamma
 \right\} = \rho + i - 1
\end{align*}
by the definition of $\gamma = \crit(V)$. Thus \(\deg W \leq \rho-\gamma\) and hence $(\kert W)_{\rho-\gamma} = W_{\rho-\gamma}$.
\end{proof}

\begin{proof}[Proof of \emph{\textbf{Theorem \ref{width}}}]
 For every $i \geq 1$, whenever $\co_{i}^{\deriv^{\!a}}(V) \neq 0$ we necessarily have $a \geq 1$ and part (2) of Proposition \ref{careful} yields
\begin{align*}
  \deg \co_{i}^{\deriv^{\!a}}(V) + a 
  &\leq \max\!\left\{h^{j}(V) + j + i : h^{j}(V) \geq 0 \text{ and }
  0 \leq j \leq a-i\right\}
  \\
  &= \max\!\left\{h^{j}(V) + j : h^{j}(V) \geq 0 \text{ and }
  0 \leq j \leq a-i\right\} + i
  \\
  &\leq \reg(V) + i
\end{align*} 
where the last line is by Theorem \ref{nss}. As a result, $\reg(V) + i$ is an upper bound for the set
\begin{align*}
 \mathbf{D}_{i}(V) := \left\{
  \deg\!\left(
           \co_{i}^{\deriv^{\!a}}(V)
        \right) 
  + a \,:\, \co_{i}^{\deriv^{\!a}}(V) \neq 0
  \right\},\,
\end{align*}
so that 
\begin{align*}
 \width{i}{V} = \sup \mathbf{D}_{i}(V) \leq \reg(V) + i \, .
\end{align*}
On the other hand, setting $\gamma := \crit(V)$ and invoking Proposition \ref{all-iso}, for every $i \geq 1$ we have $\co_{i}^{\deriv^{\!\gamma+i}}(V) \neq 0$ and
\begin{align*}
 \deg\!\left(\co_{i}^{\deriv^{\!\gamma+i}}(V)\right) &\geq \reg(V) - \gamma
 \, ,
 \\
 \deg\!\left(\co_{i}^{\deriv^{\!\gamma+i}}(V)\right) + \gamma + i &\geq \reg(V) + i \, .
\end{align*}
But the left hand side of the last inequality is an element of $\mathbf{D}_{i}(V)$, whence 
\begin{align*}
 \width{i}{V} = \sup \mathbf{D}_{i}(V) \geq \reg(V) + i \, .
\end{align*}
\end{proof} 

\bibliographystyle{hamsalpha}
\bibliography{stable-boy}

\end{document}